 \newtheorem{thm}{Theorem}[section]
 \newtheorem{cor}[thm]{Corollary}
 \newtheorem{prop}[thm]{Proposition}
 \newtheorem{con}[thm]{Conjecture}
 \theoremstyle{definition}
 \newtheorem{defn}[thm]{Definition}
 \theoremstyle{remark}
 \newtheorem{rem}[thm]{Remark}
 \numberwithin{equation}{section}
\newcommand{\pn}{\noindent}
\newcommand{\ZZ}{\mathbb{Z}}
\newcommand{\QQ}{\mathbb{Q}}
\newcommand{\CC}{\mathbb{C}}
\newcommand{\GG}{\mathbb{G}}
\newcommand{\li}{\mathrm{Lie}\,}
\newcommand{\Hom}{\mathrm{Hom}}
\newcommand{\Ext}{\mathrm{Ext}}
\newcommand{\uAut}{\underline{\mathrm{Aut}}}
\newcommand{\bPicard}{\mathrm{\mathbf{Picard}}}
\newcommand{\coker}{\mathrm{coker}}
\newcommand{\im}{\mathrm{im}}
\newcommand{\W}{\mathrm{W}}
\newcommand{\F}{\mathrm{F}}
\newcommand{\T}{\mathrm{T}}
\newcommand{\h}{\mathrm{H}}
\newcommand{\R}{\mathrm{R}}
\newcommand{\spec}{{\mathrm{Spec}}\,}
\newcommand{\ok}{\overline k}
\newcommand{\gal}{{\mathrm{Gal}}(\ok/k)}
\newcommand{\dR}{\mathrm{dR}}
\newcommand{\cD}{\mathcal{D}}
\newcommand{\cK}{\mathcal{K}}
\newcommand{\bS}{\textbf{S}}
\newcommand{\pic}{\mathcal{P}}
\newcommand{\mhs}{\mathcal{MHS}}
\begin{document}

\title[extensions and 1-motives]
{Deligne's conjecture on extensions of 1-motives}

\author{Cristiana Bertolin}

\address{Dip. di Matematica, Universit\`a di Milano, Via C. Saldini 50, I-20133 Milano}
\email{cristiana.bertolin@googlemail.com}

\subjclass{14C30}

\keywords{Strictly commutative Picard stacks, extensions, 1-motives}




\begin{abstract} 
We introduce the notion of extension of 1-motives.
 Using the dictionary between strictly commutative Picard stacks and complexes of abelian sheaves concentrated in degrees -1 and 0, we check that an extension of 1-motives induces an extension of the corresponding strictly commutative Picard stacks.
We compute the Hodge, the de Rham and the $\ell$-adic realizations of an extention of 1-motives. Using these results we can prove Deligne's conjecture on extensions of 1-motives.
\end{abstract}


\maketitle


\tableofcontents

\section*{Introduction}
Let $k$ be a field of characteristic 0 embeddable in $\CC.$  Let ${\mathcal{MR}}(k)$ be the Tannakian category of mixed realizations (for absolute Hodge cycles) over $k$.
In~\cite{D} Deligne defines the \emph{category of motives} as the Tannakian subcategory of 
${\mathcal{MR}}(k)$ generated by those mixed realizations coming from geometry.

A 1-motive $X=[L\stackrel{u}{\rightarrow}E]$ over $k$ is a geometrical object consisting of a finitely generated free $\ZZ\,$-module $L$, an extension $E$ of an abelian variety by a torus, and an homomorphism $u:L \rightarrow E$. To each 1-motive $X$ it is possible to associate its Hodge, its $\ell$-adic and its De Rham realization. These realizations together with the comparison isomorphisms build a mixed realization ${\T}(X)$ which is a motive because of the geometrical origin of $X$.

In~\cite{D} 2.4. Deligne writes: \emph{
Je conjecture que l'ensemble des motifs \`a coefficients entiers de la forme ${\T}(X)$, pour $X$ un 1-motif, est stable par extensions. Si ${\T}'$ est un motif \`a coefficients entiers, avec ${\T}' \otimes {\QQ} \stackrel{\sim}{\longrightarrow} {\T}(X)\otimes {\QQ},$ alors ${\T}'$ est de la forme ${\T}(X')$ avec $X'$ isog\`ene \`a $X$. La conjecture \'equivaut donc \`a ce que l'ensemble des motifs
${\T}(X)\otimes {\QQ}$, pour $X$ un 1-motif, soit stable par extension. Le mot ``conjecture'' est abusif en ce que l'\'enonc\'e n'a pas un sens pr\'ecis. Ce qui est conjectur\'e est que tout syst\`eme de r\'ealizations extension de ${\T}(X)$ par ${\T}(Y)$ ($X$ et $Y$ deux 1-motifs), et ``naturel'', ``provenant de la g\'eom\'etrie'', est isomorphe \`a celui d\'efini par un 1-motif $Z$ extension de $X$ par $Y$.} 

In order to explain this conjecture, Deligne furnishes the following example: 
Let $A$ be an abelian variety over $\QQ$. A point $a$ of $A({\QQ})$ defines a 1-motive $M=[{\ZZ} \stackrel{u}{\rightarrow} A]$ with $u(1)=a$.  The motive ${\T}(M)$, i.e. the mixed realization defined by $M$, is an extension of 
${\T}({\ZZ})$ by ${\T}(A).$ Therefore we have an arrow 
\begin{eqnarray*}
 \nonumber  A({\QQ}) & \longrightarrow  & {\Ext}^1({\T}({\ZZ}),{\T}(A))  \\
 \nonumber  a& \mapsto & {\T}(M)
\end{eqnarray*}
with the ${\Ext}^1$ computed in the abelian category of motives. Deligne's conjecture applied to ${\T}({\ZZ})$ and ${\T}(A)$ says that the above arrow is in fact a bijection:
\[ A({\QQ}) \cong {\Ext}^1({\T}({\ZZ}),{\T}(A)).
\]
In other words, any extension of ${\T}({\ZZ})$ by ${\T}(A)$ in the abelian category of motives (i.e. any mixed realization which is an extension of ${\T}({\ZZ})$ by ${\T}(A)$ and which comes from geometry) is defined by a unique point $a$ of $A({\QQ})$. The hypothesis \emph{"coming from geometry"} is essential (if we omit it, the conjecture is wrong: see remark \ref{essential}), but present technology gives no way to use it. Therefore, using \cite{BY83} (2.2.5), we reformulate Deligne's conjecture on extensions of 1-motives in the following way:

\begin{con}\label{cj}
Let $M_1$ and $M_2$ be two 1-motives defined over a field $k$ of characteristic 0 embeddable in $\CC$. There exists a bijection between 1-motives defined over $k$ modulo isogenies which are extensions of $M_1$ by $M_2$ and ${\Ext}^1_{\mathcal{M}(k)}({\T}(M_1) ,{\T}(M_2) )$ in the 
Tannakian subcategory $\mathcal{M}(k)$ of ${\mathcal{MR}}(k)$ generated by 1-motives:
\begin{eqnarray}
 \nonumber \varphi : \big\{ \mathrm{1-isomotive}~ M ~ \mathrm{extension~of}~M_1~ \mathrm{by}~M_2  \big\} & \stackrel{\cong}{\longrightarrow} & {\Ext}^1_{\mathcal{M}(k)}({\T}(M_1) ,{\T}(M_2) )  \\
 \nonumber M & \mapsto &  {\T}(M).
\end{eqnarray}
\end{con}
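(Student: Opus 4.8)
The plan is to establish the bijection $\varphi$ by constructing a two-sided inverse, so the argument splits naturally into three parts: well-definedness of $\varphi$, injectivity, and surjectivity.

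First I would set up the \emph{source} side carefully. Given 1-motives $M_1$ and $M_2$, an extension of $M_1$ by $M_2$ should be a 1-motive $M$ fitting in a short exact sequence $0 \to M_2 \to M \to M_1 \to 0$ of complexes $[L_i \to G_i]$, meaning compatible exact sequences on the lattice parts and on the semi-abelian parts. One checks that such an $M$ is again a 1-motive (the lattice $L$ is an extension of $L_1$ by $L_2$ hence free of finite rank, and $G$ is an extension of $G_1$ by $G_2$ hence semi-abelian), and that the collection of these, modulo isogeny, forms a set. Using the dictionary between strictly commutative Picard stacks and complexes concentrated in degrees $-1,0$ recalled earlier in the paper, an extension of 1-motives induces an extension of the associated Picard stacks, and then, applying the realization functors $\T$ computed in the body of the paper, one obtains an exact sequence $0 \to \T(M_2) \to \T(M) \to \T(M_1) \to 0$ in $\mathcal{M}(k) \subset \mathcal{MR}(k)$. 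This produces the class $\varphi(M) = \T(M) \in \Ext^1_{\mathcal{M}(k)}(\T(M_1),\T(M_2))$; that it depends only on the isogeny class of $M$ follows because isogenous 1-motives have isomorphic rational realizations, and the Baer sum structure on both sides matches the componentwise addition of extensions, so $\varphi$ is additive.

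For \textbf{injectivity}, suppose $\T(M) \cong \T(M')$ as extensions of $\T(M_1)$ by $\T(M_2)$. Here I would invoke the faithfulness of the realization functor on 1-motives: by \cite{BY83}(2.2.5) (cited in the introduction as the basis for the reformulation), an isomorphism of mixed realizations $\T(M) \cong \T(M')$ compatible with the realizations of $M_1, M_2$ is induced by an isogeny $M \to M'$ respecting the extension structure. Hence $M$ and $M'$ define the same class in the source set, giving injectivity. The more delicate point to watch is that "isomorphic as \emph{extensions}" (i.e. commuting with the identity on $\T(M_1)$ and $\T(M_2)$) should be arranged, not merely abstract isomorphism of the middle objects.

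For \textbf{surjectivity}, which I expect to be the main obstacle, one starts from an extension $0 \to \T(M_2) \to \mathcal{E} \to \T(M_1) \to 0$ in $\mathcal{M}(k)$ and must produce a 1-motive $M$ with $\T(M) \cong \mathcal{E}$. The strategy is to analyze $\mathcal{E}$ weight by weight: since $\mathcal{M}(k)$ is Tannakian with a weight filtration, $\mathcal{E}$ carries weights $-2,-1,0$, and the weight-graded pieces are controlled by those of $M_1$ and $M_2$; one reconstructs candidate lattice, torus, and abelian parts from the appropriate $\Ext^1$ and $\Hom$ groups of realizations (e.g. $\Ext^1$ of a trivial realization by a torus realization recovers points of the torus, $\Ext^1$ by an abelian-variety realization recovers rational points of the abelian variety, via the classical descriptions of these extension groups). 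Assembling these into a complex $[L \to G]$ with the correct $u$, and then checking that $\T$ of this complex recovers $\mathcal{E}$ on the nose — including the comparison isomorphisms — is the technical heart. The key inputs are the explicit computations of the Hodge, de Rham and $\ell$-adic realizations of an extension of 1-motives carried out earlier in the paper, which let one match the realization of the reconstructed $M$ against $\mathcal{E}$ realization by realization; the fact that $\mathcal{M}(k)$ is generated by 1-motives ensures no "exotic" objects intervene, so that every such $\mathcal{E}$ is forced to come from a 1-motive.
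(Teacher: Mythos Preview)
Your setup and injectivity sketch are broadly in line with the paper, but your surjectivity argument has a genuine gap and misses the paper's actual mechanism.

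For surjectivity you propose a weight-by-weight reconstruction: recover the lattice, torus, and abelian parts of a putative $M$ from various $\Hom$ and $\Ext^1$ groups computed in $\mathcal{M}(k)$, then assemble them into $[L\to G]$. The difficulty is that assertions such as ``$\Ext^1$ of a trivial realization by an abelian-variety realization recovers rational points of the abelian variety'' are precisely instances of the conjecture you are trying to prove (this is exactly Deligne's example $A(\QQ)\cong\Ext^1(\T(\ZZ),\T(A))$ from the introduction). So as written the argument is circular, or at best reduces the general case to special cases without explaining why those are any more tractable. Appealing to ``the fact that $\mathcal{M}(k)$ is generated by 1-motives'' does not help here: that constrains the Tannakian subcategory, not the shape of its $\Ext^1$ groups.

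The paper proceeds quite differently. Given an extension $E$ in $\mathcal{M}(k)$, one passes to a single embedding $\sigma\colon k\hookrightarrow\CC$ and looks only at the Hodge component $E_\sigma$. Since $E_\sigma$ is a mixed Hodge structure of type $\{(0,0),(-1,0),(0,-1),(-1,-1)\}$, Deligne's equivalence \cite{D1}~(10.1.3) between 1-motives over $\CC$ and such Hodge structures produces \emph{directly} a 1-motive $M$ with $\T_\sigma(M)\cong E_\sigma$ as extensions; this is Proposition~\ref{prop:hodge}(2), and it is the real engine of the proof. Two further points then require attention, neither of which your proposal addresses: first, $M$ is a priori over $\CC$, and one descends to $k$ by choosing a model over a finite extension $k'/k$ and applying the Weil restriction $\mathrm{Res}_{k'/k}$; second, one must show that the remaining realizations of $M$ agree with those of $E$, which is done by transporting the Hodge isomorphism $\epsilon\colon E_\sigma\to\T_\sigma(M)$ through the comparison isomorphisms $I_{\sigma,\dR}$ and $I_{\overline\sigma,\ell}$, together with Propositions~\ref{prop:l-adic} and~\ref{prop:derham}. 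In short, the paper reconstructs $M$ globally from the Hodge side in one stroke, rather than piece by piece inside $\mathcal{M}(k)$.
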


Recall that the Tannakian subcategory $\mathcal{M}(k)$ of ${\mathcal{MR}}(k)$ generated by 1-motives is the strictly full abelian subcategory of ${\mathcal{MR}}(k)$ which is generated by 1-motives by means of subquotients, direct sums, tensor products and duals. The aim of this paper is to prove the above conjecture.

This paper is organized as followed: 
in Section 1 we define the notion of extension of 1-motives. In section 2 we recall the notion of extension of strictly commutative Picard stacks, and using the dictionary between strictly commutative Picard stacks and complexes of abelian sheaves concentrated in degrees -1 and 0, we prove that an extension of 1-motives furnishes an extension of the corresponding strictly commutative Picard stacks. In Section 3 we show that there is a bijection between extensions of 1-motives and extensions of the corresponding Hodge realizations. For the $\ell$-adic and the De Rham realizations we don't have a bijection but just that extensions of 1-motives define extensions of the corresponding $\ell$-adic and De Rham realizations.
In Section 4 we prove Conjecture \ref{cj}.

The computation of the group of extensions of ${\T}({\ZZ})$ by ${\T}({\GG}_m)$ in the abelian category of motives
\[ {\GG}_m({\QQ}) \cong {\Ext}^1({\T}({\ZZ}),{\T}({\GG}_m))
\]
fits into the context of Beilinson's conjectures \cite{BL87} \S 5.

In \cite{BK07} Appendix C.9 Barbieri-Viale and Kahn provide a characterisation of the Yoneda Ext in the abelian category of 1-motives with torsion.

\section*{Acknowledgment}
The author is very grateful to Pierre Deligne who furnishes the good setting in which to work. I want to express my 
gratitude to Luca Barbieri-Viale for the various discussions we have had on 1-motives.

\section*{Notation}

Let $\bS$ be a site. 
Denote by $\cK(\bS)$ the category of complexes of abelian sheaves on the site $\bS$: all complexes that we consider in this paper are cochain complexes.
Let $\cK^{[-1,0]}(\bS)$ be the subcategory of $\cK(\bS)$ consisting of complexes $K=(K^i)_i$ such that $K^i=0$ for $i \not= -1$ or $0$. The good truncation $ \tau_{\leq n} K$ of a complex $K$ of $\cK(\bS)$ is the following complex: $ (\tau_{\leq n} K)^i= K^i$ for $i <n,  (\tau_{\leq n} K)^n= \ker(d^n)$
and $ (\tau_{\leq n} K)_i= 0$ for $i > n.$ For any $i \in {\ZZ}$, the shift functor $[i]:\cK(\bS) \rightarrow \cK(\bS) $ acts on a  complex $K=(K^n)_n$ as $(K[i])^n=K^{i+n}$ and $d^n_{K[i]}=(-1)^{i} d^{n+i}_{K}$.

Denote by $\cD(\bS)$ the derived category of the category of abelian sheaves on $\bS$, and let $\cD^{[-1,0]}(\bS)$ be the subcategory of $\cD(\bS)$ consisting of complexes $K$ such that ${\h}^i (K)=0$ for $i \not= -1$ or $0$. If $K$ and $K'$ are complexes of $\cD(\bS)$, the group ${\Ext}^i(K,K')$ is by definition ${\Hom}_{\cD(\bS)}(K,K'[i])$ for any $i \in {\ZZ}$. Let ${\R}{\Hom}(-,-)$ be the derived functor of the bifunctor ${\Hom}(-,-)$. The cohomology groups\\ ${\h}^i\big({\R}{\Hom}(K,K') \big)$ of 
${\R}{\Hom}(K,K')$ are isomorphic to ${\Hom}_{\cD(\bS)}(K,K'[i])$.

\section{Extensions of 1-motives}

Let $S$ be a scheme. 

A \emph{1-motive} $M=(X,A,T,G,u)$ over $S$ consists of
\begin{itemize}
    \item  an $S$-group scheme $X$ which is locally for the \'etale
topology a constant group scheme defined by a finitely generated free
$\ZZ \,$-module,
    \item an extension $G$ of an abelian $S$-scheme $A$ by an $S$-torus $T,$
    \item a morphism $u:X \rightarrow G$ of $S$-group schemes.
\end{itemize}

A 1-motive $M=(X,A,T,G,u)$ can be viewed also as a complex $[X \stackrel{u}{\rightarrow}G]$ of commutative $S$-group schemes with $X$ concentrated in degree -1 and $G$ concentrated in degree 0. A morphism of 1-motives is a morphism of complexes of commutative $S$-group schemes. Denote by $1-\mathrm{Mot}(S)$ the category of 1-motives over $S$. It is an additive category but \emph{it isn't an abelian category.}

Let $S={\spec}(k)$ be the spectrum of an algebraically closed field $k$.
Denote by $1-\mathrm{Isomot}(k)$ the $\QQ$-linear category associated to the category $1-\mathrm{Mot}(k)$ of 1-motives over $k$ (it has the same objects as $1-\mathrm{Mot}(k)$, but its sets of arrows are the sets of arrows of $1-\mathrm{Mot}(k)$ tensored with $\QQ$).
 The objects of $1-\mathrm{Isomot}(k)$ are called 1-isomotifs and the morphisms of $1-\mathrm{Mot}(k)$ which become isomorphisms in $1-\mathrm{Isomot}(k)$ are the
isogenies between 1-motives, i.e. the morphisms of complexes
$(f^{-1},f^0):[X  \rightarrow G] \rightarrow [X' \rightarrow G']$ such that
 $f^{-1}:X \rightarrow X'$ is injective with finite cokernel and
$f^{0}:G \rightarrow G'$ is surjective with finite kernel. \emph{The category $1-\mathrm{Isomot}(k)$ is an abelian category.} From now on, we write 1-motive instead of 1-isomotive, unless it is necessary to specify that we work modulo isogenies.

The results of this section are true for any base scheme $S$ such that 
the category $1-\mathrm{Isomot}(S)$ is abelian. 
Let $M_1=[X_1 \stackrel{u_1}{\rightarrow}G_1]$ and $M_2=[X_2 \stackrel{u_2}{\rightarrow}G_2]$ be two 1-motives defined over such a base scheme $S$.

\begin{defn}\label{def1-mot} An \emph{extension $(M,i,j)$ of $M_1$ by $M_2$} consists of a 1-motive $M=[X \stackrel{u}{\rightarrow}G]$ defined over $S$ and two morphisms of 1-motives 
$i=(i_{-1},i_0):M_2 \rightarrow M$ and $j=(j_{-1},j_0):M \rightarrow M_1$
\begin{equation}
\xymatrix{
 X_2 \; \ar@{^{(}->}[r]^{i_{-1}} \ar[d]_{u_2}& \; X \; \ar[d]^{u}\ar@{->>}[r]^{j_{-1}}& \;X_1 \ar[d]^{u_1}\\
 G_2 \; \ar@{^{(}->}[r]^{i_0}& \; G \; \ar@{->>}[r]^{j_0}& \; G_1 
}	
\end{equation}
such that 
\begin{itemize}
	\item $j_{-1} \circ i_{-1}=0, j_0 \circ i_0=0$,
	\item $i_{-1}$ and $i_0$ are injective,
	\item $j_{-1}$ and $j_0$ are surjective, and
	\item $u$ induces an isomorphism between the quotients $\ker(j_{-1})/ \im(i_{-1})$ and $\ker(j_0)/ \im(i_0)$.
\end{itemize}
\end{defn}

Often we will write only $M$ instead of $(M,i,j)$.

\section{Geometrical interpretation}

Let {\bS} be a site. A \emph{strictly commutative Picard {\bS}-stack} is an {\bS}-stack of groupoids $\pic$ endowed with a functor $ +: \pic \times_{\bS} \pic \rightarrow \pic, ~~(a,b) \mapsto a+b$, and two 
  natural isomorphisms of associativity $\sigma$ and of commutativity $\tau$,
such that for any object $U$ of $\bS$, $(\pic (U),+,\sigma, \tau)$ is a strictly commutative Picard category (see \cite{SGA4} 1.4.2 for more details).
An \emph{additive functor} $(F,\sum):\pic_1 \rightarrow \pic_2 $
between strictly commutative Picard {\bS}-stacks is a morphism of {\bS}-stacks endowed with a natural isomorphism $\sum: F(a+b) \cong F(a)+F(b)$ (for all $a,b \in \pic_1$) which is compatible with the natural isomorphisms $\sigma$ and $\tau$ of
$\pic_1$ and $\pic_2$. A \emph{morphism of additive functors $u:(F,\sum) \rightarrow (F',\sum') $} is an {\bS}-morphism of {\bS}-functors (see \cite{G} Chapter I 1.1) which is compatible with the natural isomorphisms $\sum$ and $\sum'$.

To any strictly commutative Picard {\bS}-stack $\pic$ we associate two abelian sheaves: $\pi_0(\pic)$ the sheaffification of the pre-sheaf which associates to each object $U$ of $\bS$ the group of isomorphism classes of objects of $\pic(U)$, and $\pi_1(\pic)$ the sheaf of automorphisms ${\uAut}(e)$ of the neutral object $e$ of $\pic$.

Denote by ${\bPicard}(\bS)$ the category whose objects are small strictly commutative Picard $\bS$-stacks and whose arrows are isomorphism classes of additive functors. In~\cite{SGA4} \S 1.4 Deligne constructs an equivalence of category  
\begin{equation}\label{st}
 st: \cD^{[-1,0]}(\bS) \longrightarrow  {\bPicard}(\bS) 
\end{equation}
which furnishes a dictionary between strictly commutative Picard $\bS$-stacks and complexes of abelian sheaves on $\bS$. We denote by $[\,\,]$ the inverse equivalence of $st$. 

An \emph{extension $\pic=(\pic,I:\pic_2 \rightarrow \pic, J:\pic \rightarrow \pic_1) $ of $\pic_1$ by $\pic_2$} consists of a strictly commutative Picard $\bS$-stack $\pic$, two additive functors $I:\pic_2 \rightarrow \pic$ and $ J:\pic \rightarrow \pic_1$, and an isomorphism of additive functors between the composite $J \circ I$ and the trivial additive functor: $J \circ I \cong 0,$ such that the following equivalent conditions are satisfied:
   \begin{description}
     \item[(a)] $\pi_0(J): \pi_0(\pic) \rightarrow \pi_0(\pic_1)$ is surjective and $I$ induces an equivalence of strictly commutative Picard $\bS$-stacks between $\pic_2$ and $\ker(J)$;
     \item[(b)] $\pi_1(I): \pi_1(\pic_2) \rightarrow \pi_1(\pic)$ is injective and $J$ induces an equivalence of strictly commutative Picard $\bS$-stacks between $\coker(I)$ and $\pic_1$. 
   \end{description}
(see \cite{Be10} for the definition of kernel and cokernel of an additive functor).
Let $K=[K^{-1} \stackrel{d^K}{\rightarrow}K^0]$ and $L=[L^{-1} \stackrel{d^L}{\rightarrow}L^0]$ be complexes of $\cK^{[-1,0]}(\bS)$, and let $F: st(K)  \rightarrow st(L)$ be an additive functor induced by a morphism of complexes $f: K \rightarrow L$ in $\cK^{[-1,0]}(\bS)$. According to \cite{Be10} Lemma 3.4 we have 
\begin{eqnarray}
\label{eq:[ker]} [\ker(F)] & = & \tau_{ \leq 0}\big( MC(f)[-1]\big) =\big[K^{-1} ~ \stackrel{(f^{-1},-d^K)}{\longrightarrow} ~  \ker(d^L,f^0)\big]\\
\label{eq:[coker]} [\coker(F)] & = & \tau_{\geq -1}MC(f) =\big[\coker(f^{-1},-d^K) ~ \stackrel{(d^L,f^0)}{\longrightarrow} ~ L^0 \big]
\end{eqnarray}
 where $MC(f)$ is the mapping cone of the morphism $f$. Hence we have the following

\begin{cor}\label{cor:extinD}
Let
\[ K \stackrel{i}{\longrightarrow} L \stackrel{j}{\longrightarrow} M \]
be morphisms of complexes of $\cK^{[-1,0]}(\bS)$ and denote by
$I$ and $J$
the additive functors induced by $i$ and $j$ respectively.
Then the strictly commutative Picard $\bS$-stack $st(L)=(st(L),I,J)$ is an extension of $st(M)$ by $st(K)$ if and only if $j \circ i=0$ and 
the following equivalent conditions are satisfied: 
\begin{description}
	\item[(a)] ${\h}^{0}(j): {\h}^{0}(L) \rightarrow {\h}^{0}(M)$ is surjective and $i$ induces a quasi-iso\-mor\-phism between $K$ and $ \tau_{\leq 0} (MC(j)[-1])$;
      \item[(b)] ${\h}^{-1}(i): {\h}^{-1}(K) \rightarrow {\h}^{-1}(L)$ is injective and $j$ induces a quasi-iso\-mor\-phism between $ \tau_{\geq -1} MC(i)$ and $M$.
\end{description}
\end{cor}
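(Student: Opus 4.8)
The plan is to deduce Corollary~\ref{cor:extinD} from the definition of an extension of strictly commutative Picard $\bS$-stacks by transporting it through the dictionary $st\colon\cD^{[-1,0]}(\bS)\to\bPicard(\bS)$ and its inverse $[\,\,]$. I will use three facts about this dictionary. First, for $C\in\cD^{[-1,0]}(\bS)$ there are canonical isomorphisms $\pi_0(st(C))\cong\h^0(C)$ and $\pi_1(st(C))\cong\h^{-1}(C)$, functorial in $C$; in particular $\pi_0(J)$ and $\pi_1(I)$ are identified with $\h^0(j)$ and $\h^{-1}(i)$. Second, an additive functor induced by a morphism of complexes is an equivalence of strictly commutative Picard $\bS$-stacks if and only if that morphism is a quasi-isomorphism, because $st$ is an equivalence of categories and hence reflects isomorphisms. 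Third, arrows in $\bPicard(\bS)$ are isomorphism classes of additive functors, so $[\,\,]$ turns an isomorphism class into a morphism in $\cD^{[-1,0]}(\bS)$, and $J\circ I=st(j)\circ st(i)=st(j\circ i)$ has class $j\circ i$.

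From the third fact, there is an isomorphism of additive functors $J\circ I\cong 0$ if and only if $j\circ i=0$ in $\cD^{[-1,0]}(\bS)$, which is the first clause on the right-hand side of the asserted equivalence. Assuming this, fix such a trivialisation; then $I$ factors canonically through the inclusion $\ker(J)\hookrightarrow st(L)$, yielding $\overline I\colon st(K)\to\ker(J)$, and dually $J$ factors canonically through the projection $st(L)\to\coker(I)$, yielding $\overline J\colon\coker(I)\to st(M)$. Applying the formulas \eqref{eq:[ker]} and \eqref{eq:[coker]} to $F=J$ and $F=I$ gives $[\ker(J)]=\tau_{\leq 0}\big(MC(j)[-1]\big)$ and $[\coker(I)]=\tau_{\geq -1}MC(i)$.

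The one computation to carry out is the identification of $[\overline I]$ and $[\overline J]$. Unwinding the canonical factorisations in terms of the trivialisation $j\circ i=0$ and the mapping-cone differentials used in \cite{Be10} Lemma~3.4, one checks that $[\overline I]\colon K\to\tau_{\leq 0}\big(MC(j)[-1]\big)$ is, up to sign conventions, the morphism of complexes with components assembled from $i^{-1}$ and $i^0$ --- that is, exactly the ``morphism induced by $i$'' of condition (a) --- and symmetrically that $[\overline J]\colon\tau_{\geq -1}MC(i)\to M$ is the morphism induced by $j$ of condition (b). Granting this, ``$I$ induces an equivalence between $st(K)$ and $\ker(J)$'' is equivalent, by the second fact, to ``$i$ induces a quasi-isomorphism between $K$ and $\tau_{\leq 0}(MC(j)[-1])$'', and likewise ``$J$ induces an equivalence between $\coker(I)$ and $st(M)$'' is equivalent to ``$j$ induces a quasi-isomorphism between $\tau_{\geq -1}MC(i)$ and $M$''; the surjectivity of $\pi_0(J)$ and the injectivity of $\pi_1(I)$ become the stated conditions on $\h^0(j)$ and $\h^{-1}(i)$ by the first fact.

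Assembling these equivalences: $st(L)=(st(L),I,J)$ is an extension of $st(M)$ by $st(K)$ precisely when $J\circ I\cong 0$ and condition (a) (equivalently (b)) of the definition of extension of Picard stacks holds, and by the above this happens precisely when $j\circ i=0$ and condition (a) (equivalently (b)) of the corollary holds; the equivalence of (a) and (b) in the corollary is the equivalence of (a) and (b) in the definition, read through the dictionary. I expect the only real obstacle to be the bookkeeping of the third paragraph --- matching the trivialisation used to split $J\circ I$ against the mapping-cone conventions of \cite{Be10} Lemma~3.4 and verifying that $[\overline I]$ and $[\overline J]$ are genuinely the maps induced by $i$ and $j$, signs included. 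Everything else is formal, being a restatement of the compatibility of $st$ with $\pi_0$, $\pi_1$, kernels and cokernels.
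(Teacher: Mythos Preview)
Your proposal is correct and follows exactly the line of reasoning the paper has in mind: the corollary is stated immediately after the formulas \eqref{eq:[ker]}--\eqref{eq:[coker]} with only the word ``Hence'' as justification, so the intended proof is precisely the translation through the dictionary $st$ that you have written out --- identifying $\pi_0,\pi_1$ with $\h^0,\h^{-1}$, reading $[\ker(J)]$ and $[\coker(I)]$ off from \eqref{eq:[ker]} and \eqref{eq:[coker]}, and observing that equivalences of Picard stacks correspond to quasi-isomorphisms. Your only addition is to make explicit the bookkeeping on the induced maps $[\overline I]$ and $[\overline J]$, which the paper leaves implicit.
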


Let $S$ be a scheme. From now on the site $\bS$ is the big fppf site over $S$.

\begin{prop}\label{prop:1-motpicstack} Let $M_1=[X_1 \stackrel{u_1}{\rightarrow}G_1]$ and $M_2=[X_2 \stackrel{u_2}{\rightarrow}G_2]$ be two 1-motives defined over $S$. If $M=[X \stackrel{u}{\rightarrow}G]$ is an extension of $M_1$ by $M_2$, then $st(M)$ is an extension of $st(M_1)$ by $st(M_2).$ 
\end{prop}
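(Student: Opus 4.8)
The plan is to deduce the statement from Corollary~\ref{cor:extinD}. Regarding the 1-motives $M_2,M,M_1$ as complexes of abelian sheaves on the big fppf site $\bS$ over $S$, the morphisms of 1-motives $i\colon M_2\to M$ and $j\colon M\to M_1$ of Definition~\ref{def1-mot} are morphisms in $\cK^{[-1,0]}(\bS)$, and the conditions $j_{-1}\circ i_{-1}=0$, $j_0\circ i_0=0$ say exactly that $j\circ i=0$. Under $st$ one then has $I=st(i)$, $J=st(j)$ and $J\circ I\cong 0$, so by Corollary~\ref{cor:extinD} it suffices to verify condition (b): that $\h^{-1}(i)$ is injective and that $j$ induces a quasi-isomorphism $\tau_{\geq -1}MC(i)\xrightarrow{\ \sim\ } M_1$.

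Injectivity of $\h^{-1}(i)\colon\ker(u_2)\to\ker(u)$ is immediate from injectivity of $i_{-1}$ together with commutativity of the left-hand square. For the second half I would write down $\tau_{\geq -1}MC(i)$ explicitly via formula~\eqref{eq:[coker]}: it is the complex $[\,Q\xrightarrow{\ \overline{(u,i_0)}\ }G\,]$ concentrated in degrees $-1,0$, where $Q=\coker\!\big(X_2\xrightarrow{(i_{-1},-u_2)}X\oplus G_2\big)$. Since $j\circ i=0$ on the nose, the chain map $j$ descends to a morphism $\tau_{\geq -1}MC(i)\to M_1$, equal to $j_0$ in degree $0$ and to $[(x,g_2)]\mapsto j_{-1}(x)$ in degree $-1$; what remains is to see this induces an isomorphism on $\h^0$ and on $\h^{-1}$.

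On $\h^0$ the induced map is $G/(\im u+\im i_0)\to G_1/\im u_1$ coming from $j_0$; it is surjective because $j_0$ is, and injective because $\ker j_0=\im i_0$, $j_{-1}$ is surjective and $j_0u=u_1j_{-1}$ (lift an element of $G$ that is killed by $j_0$ modulo $\im u_1$ back along $j_{-1}$). On $\h^{-1}$ the map $\ker(Q\to G)\to\ker u_1$ is again surjective using $\ker j_0=\im i_0$ and surjectivity of $j_{-1}$. The step I expect to be the only non-formal one is injectivity on $\h^{-1}$: given $(x,g_2)$ with $u(x)+i_0(g_2)=0$ and $j_{-1}(x)=0$, one has $x\in\ker j_{-1}$ and $u(x)=-i_0(g_2)\in\im i_0$, so the class of $x$ dies under the isomorphism $\ker(j_{-1})/\im(i_{-1})\xrightarrow{\ \sim\ }\ker(j_0)/\im(i_0)$ induced by $u$; this is precisely the last bullet of Definition~\ref{def1-mot}, and it is exactly what makes the argument go through. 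Hence $x=i_{-1}(x_2)$ for some $x_2$, whence $i_0\big(g_2+u_2(x_2)\big)=0$ and, $i_0$ being injective, $(x,g_2)=(i_{-1}(x_2),-u_2(x_2))$ is $0$ in $Q$.

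A word on how the element-chasing is to be understood: because $\bS$ is a site, all instances of ``injective'', ``surjective'', ``kernel'', ``cokernel'' above refer to the abelian category of fppf abelian sheaves on $S$, and the diagram chases are to be run in that abelian category (equivalently, locally on fppf covers), which is legitimate. Of the four conditions in Definition~\ref{def1-mot}, only the isomorphism of quotients induced by $u$ does essential work; the other three are used merely to make the maps above well defined and to dispatch the easy halves of the bijectivity checks. Alternatively one could verify condition (a) of Corollary~\ref{cor:extinD} instead, computing $\tau_{\leq 0}\big(MC(j)[-1]\big)$ by~\eqref{eq:[ker]}; the two verifications are formally equivalent, and (b) seems the more economical one.
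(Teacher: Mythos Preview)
Your overall strategy is the same as the paper's --- reduce to Corollary~\ref{cor:extinD} --- except that you verify condition~(b) while the paper verifies~(a). That choice is harmless; the two computations are dual. Your treatment of the delicate step, injectivity on $\h^{-1}$, is correct and uses the fourth bullet of Definition~\ref{def1-mot} exactly as required.

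There is, however, a genuine slip in the ``easy'' halves. Twice you invoke $\ker j_0=\im i_0$ (for injectivity on $\h^0$ and for surjectivity on $\h^{-1}$), but Definition~\ref{def1-mot} does \emph{not} assert that the rows are short exact in each degree: it only says that $u$ induces an isomorphism $\ker(j_{-1})/\im(i_{-1})\xrightarrow{\ \sim\ }\ker(j_0)/\im(i_0)$, and these quotients need not vanish. What you actually need, and what does follow from the definition, is the weaker containment
\[
\ker j_0 \ \subseteq\ \im i_0 + u\big(\ker j_{-1}\big)\ \subseteq\ \im i_0 + \im u,
\]
obtained from \emph{surjectivity} of that induced isomorphism. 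With this in hand your $\h^0$--injectivity argument goes through unchanged (after reducing to $g-u(x)\in\ker j_0$ you land in $\im i_0+\im u$), and the $\h^{-1}$--surjectivity argument is fixed the same way: once $u(x)\in\ker j_0$, write $u(x)\equiv u(x')\pmod{\im i_0}$ with $x'\in\ker j_{-1}$ and replace $x$ by $x-x'$. The paper's proof of condition~(a) uses precisely this consequence of the fourth bullet (phrased there as the short exact sequence $0\to X_2\to\ker j_{-1}\to\ker j_0/\im i_0\to 0$), never $\ker j_0=\im i_0$ itself.
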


\begin{proof} Denote by
$(i_{-1},i_0):M_2 \rightarrow M$ and $(j_{-1},j_0):M \rightarrow M_1$ the morphisms of 1-motives underlying the extension $M$ of $M_1$ by $M_2$.
These morphisms furnish two additive functors:
$$ I:st(M_2) \longrightarrow st(M) \qquad \mathrm{and} \qquad  J:st(M) \longrightarrow st(M_1).$$
First observe that the conditions $j_{-1} \circ i_{-1}=0$ and $ j_0 \circ i_0=0$  imply that $J \circ I \cong 0.$ Remark also that since $j_0:G \rightarrow G_1$ is surjective, also the morphism ${\h}^0(j):G/u(X) \rightarrow G_1/u_1(X_1)$ is surjective. \\
By Corollary \ref{cor:extinD}, it remains to prove that the morphism of complexes $i$ induces a quasi-isomorphism between $M_2$ and $\tau_{\leq 0} (MC(j)[-1])$.
 Explicitly $\tau_{\leq 0} (MC(j)[-1])$ is the complex 
\[[ X \stackrel{k}{\longrightarrow}\ker(u_1,j_0) ] \]
 with $k: X \rightarrow  \ker(u_1,j_0)$ the morphism induced by $(j_{-1},-u): X \rightarrow  X_1+G$, and so we have to prove that $(i_{-1},i_0)$ induces the quasi-isomorphisms
 
\begin{align}
\label{eq:1}\ker(u_2) &\cong \ker(k), \\
\label{eq:2} G_2 /u_2(X_2)& \cong \ker(u_1,j_0) / (j_{-1},-u)(X) .
\end{align}
We start with the first isomorphism. Because of the commutativity of the first square of diagram (\ref{def1-mot}), $i_{-1}(\ker(u_2))$ is contained in $\ker(u)$. Since $ j_{-1} \circ i_{-1}=0,$   $i_{-1}(\ker(u_2))$ is contained also in $\ker(j_{-1})$ and so 
we have the inclusion $i_{-1}(\ker(u_2)) \subseteq \ker(k).$
The isomorphism between the quotients $\ker(j_{-1})/ \im(i_{-1})$ and $\ker(j_0)/ \im(i_0)$ induces the exact sequence
\[ 0 \longrightarrow X_2 \stackrel{i_{-1}}{\longrightarrow} \ker(j_{-1}) \stackrel{u}{\longrightarrow} \ker(j_0)/ \im(i_0) \longrightarrow 0 \]
Therefore we have the equality $ \ker(k) \subseteq i_{-1}(X_2)$.
Now because of the commutativity of the first square of diagram (\ref{def1-mot}) and because of the injectivity of $i_0$ we have that $i_{-1}( \ker(u_2))$ contains  
$ \ker(k)$. Hence we can conclude that via the morphism $i_{-1},$ $\ker(u_2)$ and $\ker(k)$ are isomorphic.\\
Concerning the second isomorphism (\ref{eq:2}), since $j_{-1}: X \rightarrow X_1$ is surjective, we have the isomorphism  $\ker(u_1,j_0) / (j_{-1},-u)(X) \cong \ker(j_0) / u(X)$, and so we have to prove that the morphism $ i_0: G_2 \rightarrow  G$ induces an isomorphism
$$G_2 /u_2(X_2) \cong \ker(j_0) / u(X).$$
Since the morphism $u: X \rightarrow G$ induces the isomorphism  $\ker(j_{-1})/ i_{-1}(X_2) \cong \ker(j_0)/ i_0(G_2)$, the composite of the injection 
$ i_0: G_2 \rightarrow  \ker(j_0)$ with the projection $ \ker(j_0) \rightarrow  \ker(j_0) / u(X)$ furnishes the surjection
$$ p: G_2 \longrightarrow \ker(j_0) / u(X). $$
Because of the commutativity of the first square of diagram (\ref{def1-mot}),
 $u_2(X_2)$ is contained in $\ker(p).$
On the other hand $i_0 (\ker (p))$ is contained in $u(X)$.
The isomorphism $\ker(j_{-1})/ i_{-1}(X_2) \cong \ker(j_0)/ i_0(G_2)$ implies that in fact $i_0 (\ker (p))$ is contained in $u(i_{-1}(X_2)).$
Because of the commutativity of the first square of diagram (\ref{def1-mot}) and because of the injectivity of $i_0,$ $ \ker(p)$ is contained in $u_2(X_2)$. Hence via the morphism $i_{0},$ $G_2 /u_2(X_2)$ and $\ker(j_0) / u(X)$ are isomorphic.
\end{proof}

By the above proposition, the group law for extensions of strictly commutative Picard $\bS$-stacks defined in \cite{Be10} \S 4 furnishes a group law for extensions of 1-motives. The neutral object with respect to this group law on the set of isomorphism classes of extensions of $M_1=[X_1 \stackrel{u_1}{\rightarrow}G_1]$ by $M_2=[X_2 \stackrel{u_2}{\rightarrow}G_2]$ is the 1-motive $M_1 + M_2=[X_1 \times X_2 \stackrel{(u_1,u_2)}{\rightarrow}G_1\times G_2].$

\section{Transcendental and algebraic interpretations}

First we recall briefly the construction of the Hodge, De Rham and $\ell$-adic realizations of a 1-motive $M=(X,A,T,G,u)$ defined over $S$ (see~\cite{D1} \S 10.1 for more details):
\begin{itemize}
	\item if $S$ is the spectrum of the field $\CC$ of complex numbers, the \emph{Hodge realization} ${\T}_{\h}(M)=({\T}_{\ZZ}(M), {\W}_*,{\F}^*)$ of $M$ is the mixed Hodge structure consisting of the fibred product ${\T}_{\ZZ}(M)={\rm Lie}(G)\times_{G} X $ (viewing ${\rm Lie}(G)$ over $G$ via the exponential map and $X$ over $G$ via $u$) and of the weight and Hodge filtrations defined in the following way:
\begin{eqnarray}
\nonumber  {\W}_{0}({\T}_{\ZZ}(M)) &=& {\T}_{\ZZ}(M), \\
\nonumber  {\W}_{-1}({\T}_{\ZZ}(M)) &=& {\h}_1(G,\ZZ), \\
\nonumber  {\W}_{-2}({\T}_{\ZZ}(M)) &=& {\h}_1( T,\ZZ), \\
\nonumber  {\F}^0 ({\T}_{\ZZ}(M) \otimes {\CC}) &=& \ker\big( {\T}_{\ZZ}(M) \otimes {\CC} \longrightarrow {\rm Lie}(G)\big).
\end{eqnarray}
\item if $S$ is the spectrum of a field $k$ of characteristic 0 embeddable in $\CC$,
the \emph{$\ell$-adic realization} ${\T}_{\ell}(M)$ of the 1-motive $M$ is the projective limit of the ${\ZZ}/\ell^n{\ZZ}\,$-modules 
\[ {\T}_{{\ZZ}/\ell^n{\ZZ}}(M) =  \big\{(x,g) \in X \times G ~~\vert~~ u(x)=\ell^n \, g\big\} \big/ \big\{(\ell^n \, x ,u(x))~~\vert~~ x\in X\big\}.\] 
\item if $S$ is the spectrum of a field $k$
of characteristic 0 embeddable in $\CC,$ the \emph{de Rham realization} ${\T}_{\dR}(M)$ of $M$ is the Lie algebra of $G^\natural$ where $M^\natural=[X \rightarrow G^\natural]$ is the universal vectorial extension of $M$ by the vectorial group ${\Ext}^1(M,{\GG}_a)^*$. The Hodge filtration on ${\T}_{\dR}(M)$ is defined by ${\F}^0{\T}_{\dR}(M)= \ker ( {\li}G^\natural \rightarrow {\li}G ).$
\end{itemize}

\begin{prop}\label{prop:hodge}
Let $M_1=[X_1 \stackrel{u_1}{\rightarrow}G_1]$ and $M_2=[X_2 \stackrel{u_2}{\rightarrow}G_2]$ be two 1-motives defined over $\CC$. 
\begin{enumerate}
	\item If $M=[X \stackrel{u}{\rightarrow}G]$ is an extension of $M_1$ by $M_2$, then ${\T}_{\h}(M)$ is an extension of ${\T}_{\h}(M_1)$ by ${\T}_{\h}(M_2)$ in the abelian category $\mhs$ of mixed Hodge structures.
      \item Let $E$ be an extension of ${\T}_{\h}(M_1)$ by ${\T}_{\h}(M_2)$ in the category $\mhs$. Then modulo isogenies, there exists a unique extension $M$ of $M_1$ by $M_2$ which defines the isomorphism class of the extension $E$ i.e. such that ${\T}_{\h}(M)$ and $E$ are isomorphic in $\mhs$ as extensions of ${\T}_{\h}(M_1)$ by ${\T}_{\h}(M_2).$ 
\end{enumerate}
In other words, we have a bijection
\begin{eqnarray}
 \nonumber \varphi : \big\{ \mathrm{1-isomotive}~ M ~ \mathrm{extension~of}~M_1~ \mathrm{by}~M_2  \big\} & \stackrel{\cong}{\longrightarrow} & {\Ext}^1_{\mhs}({\T}_{\h}(M_1) ,{\T}_{\h}(M_2) )  \\
 \nonumber M & \mapsto &  {\T}_{\h}(M).
\end{eqnarray}
\end{prop}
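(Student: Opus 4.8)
The plan is to establish the two parts of the proposition separately, since part (1) produces the map $\varphi$ and part (2) shows it is a bijection. For part (1), I would start from Proposition \ref{prop:1-motpicstack}: an extension $M$ of $M_1$ by $M_2$ yields an extension $st(M)$ of $st(M_1)$ by $st(M_2)$ of strictly commutative Picard stacks over $\spec(\CC)$. The point is then that the Hodge realization functor ${\T}_{\h}$, being built from the fibred product ${\rm Lie}(G)\times_G X$ together with functorially defined weight and Hodge filtrations, is additive and exact in an appropriate sense. Concretely, I would check that the defining diagram of an extension of 1-motives (Definition \ref{def1-mot}) --- the injectivity of $i_{-1},i_0$, the surjectivity of $j_{-1},j_0$, and the isomorphism $\ker(j_{-1})/\im(i_{-1})\cong\ker(j_0)/\im(i_0)$ --- translates, after applying ${\rm Lie}(-)\times_{(-)}(-)$, into the exactness of the sequence $0\to{\T}_{\ZZ}(M_2)\to{\T}_{\ZZ}(M)\to{\T}_{\ZZ}(M_1)\to 0$ of underlying $\ZZ$-modules; one then verifies that the weight and Hodge filtrations on ${\T}_{\ZZ}(M)$ restrict to those on ${\T}_{\ZZ}(M_2)$ and induce those on ${\T}_{\ZZ}(M_1)$, using the description ${\W}_{-1}={\h}_1(G,\ZZ)$, ${\W}_{-2}={\h}_1(T,\ZZ)$ and the fact that $T,G,X$ sit in short exact sequences compatibly with $i,j$. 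This gives an exact sequence of mixed Hodge structures and hence a well-defined class in ${\Ext}^1_{\mhs}({\T}_{\h}(M_1),{\T}_{\h}(M_2))$; that $\varphi$ depends only on the isomorphism class of $M$ as an extension, and is a group homomorphism, follows from the functoriality just established together with the Baer-sum description compatible with the group law on Picard-stack extensions recorded after Proposition \ref{prop:1-motpicstack}.

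For part (2) --- surjectivity and injectivity of $\varphi$ --- the key input is Deligne's theorem that ${\T}_{\h}$ is fully faithful on $1-\mathrm{Isomot}(\CC)$, i.e. that the category of 1-isomotives over $\CC$ is equivalent to its essential image, the category of torsion-free mixed Hodge structures of type $\{(0,0),(0,-1),(-1,0),(-1,-1)\}$ (Deligne, \cite{D1} \S 10). Given an extension $E$ of ${\T}_{\h}(M_1)$ by ${\T}_{\h}(M_2)$ in $\mhs$, I would first argue that $E$, being an extension of two such mixed Hodge structures, is itself (up to the torsion issue, handled by passing to isomotives) of the right Hodge-theoretic type, hence of the form ${\T}_{\h}(M)$ for some 1-isomotive $M$; then the morphisms ${\T}_{\h}(M_2)\to E\to{\T}_{\h}(M_1)$ come, by full faithfulness, from morphisms of 1-isomotives $M_2\to M\to M_1$, and one checks these exhibit $M$ as an extension in the sense of Definition \ref{def1-mot} (injectivity, surjectivity and the quotient condition can all be read off from the corresponding exactness of Hodge realizations, again by full faithfulness plus the weight filtration, which forces the maps on $X$'s, $T$'s, $A$'s to have the required properties). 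Injectivity of $\varphi$ is then immediate: if ${\T}_{\h}(M)$ and ${\T}_{\h}(M')$ are isomorphic as extensions, full faithfulness produces an isomorphism $M\cong M'$ of 1-isomotives compatible with $i,j$, i.e. an isomorphism of extensions.

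The main obstacle I anticipate is the interface between the ``torsion-free'' world of 1-motives and the possibly-torsion category $\mhs$, which is precisely why the statement is phrased modulo isogenies: I would need to be careful that an arbitrary extension $E$ in $\mhs$ of two 1-motivic Hodge structures need not itself be literally a Hodge realization on the nose, but becomes one after clearing denominators / replacing by an isogenous object, and that Deligne's equivalence is between $1-\mathrm{Isomot}(\CC)$ and the relevant $\QQ$-linear category of mixed Hodge structures. A secondary technical point is verifying that the ``quotient condition'' $\ker(j_{-1})/\im(i_{-1})\cong\ker(j_0)/\im(i_0)$ in Definition \ref{def1-mot} is exactly what is needed --- no more, no less --- to match exactness of ${\T}_{\h}$; I expect this to follow by unwinding the fibred-product description as in the proof of Proposition \ref{prop:1-motpicstack}, but it must be checked rather than asserted. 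Everything else --- additivity, compatibility of filtrations, the group-law statement --- should be routine given the dictionary of Section 2 and the functoriality of the constructions in \cite{D1} \S 10.1.
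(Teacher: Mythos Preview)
Your proposal is correct and follows essentially the same route as the paper: for part (1) you invoke Proposition~\ref{prop:1-motpicstack} and then verify exactness of the Hodge realizations directly, while for part (2) you use Deligne's equivalence \cite{D1}~(10.1.3) to produce $M$ from $E$ and full faithfulness to recover the morphisms $i,j$ and check the extension axioms. The only notable difference in execution is that for part (1) the paper does not verify exactness of $0\to{\T}_{\ZZ}(M_2)\to{\T}_{\ZZ}(M)\to{\T}_{\ZZ}(M_1)\to 0$ by hand from Definition~\ref{def1-mot}; instead it exploits Corollary~\ref{cor:extinD} to identify $M_2$ with $\tau_{\leq 0}(MC(j)[-1])$ in $\cD(\bS)$, computes the Hodge realization of this truncated mapping cone explicitly, and recognizes it as $\ker\big({\T}_{\h}(j)\big)$ --- this packages the ``quotient condition'' verification you anticipate into the mapping-cone formalism already set up in Section~2, but the underlying content is the same.
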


\begin{proof} (1) Denote by $i=(i_{-1},i_0): M_2 \rightarrow M$ and $j=(j_{-1},j_0): M \rightarrow M_1$ the morphisms of 1-motives underlying the extension $M=(M,i,j)$.
By Proposition \ref{prop:1-motpicstack}, the strictly commutative Picard \bS-stack $st(M)$ is an extension of $st(M_1)$ by $st(M_2)$.
Corollary \ref{cor:extinD} implies that via $i$ the complexes $M_2$ and $ \tau_{\leq 0} (MC(j)[-1])$ are isomorphic in the derived category ${\cD}(\bS)$, and so,  
via the morphism ${\T}_{\h}(i_{-1},i_0)$ induced by $i=(i_{-1},i_0)$,
their Hodge realizations are isomorphic in the category $\mhs$:
\[ {\T}_{\h}(i_{-1},i_0): {\T}_{\h}(M_2) \stackrel{\cong}{\longrightarrow} {\T}_{\h}(\tau_{\leq 0} (MC(j)[-1])). \]
Explicitly the $\ZZ \,$-module underlying the Hodge realization of 
$\tau_{\leq 0} (MC(j)[-1])$ is
\begin{eqnarray} \label{conehodge}
 {\T}_{\ZZ}(\tau_{\leq 0} (MC(j)[-1])) &=& \li(\ker(u_1,j_0))\times_{\ker(u_1,j_0)} X \\
\nonumber  &=& \big(\li(\ker(j_0))  \oplus \ker(u_1)\big)\times_{\ker(u_1,j_0)} X
\end{eqnarray}
The morphism of 1-motive $j=(j_{-1},j_0):M \rightarrow M_1 $ induces 
a morphism $ {\T}_{\h}(j_{-1},j_0): {\T}_{\h}(M) \rightarrow {\T}_{\h}(M_1) $
 between the Hodge realizations of $M$ and $M_1$. To have this morphism is the same as to have the morphisms $\li({j_0}): \li(G) \rightarrow \li(G_1)$ and $j_{-1}: X \rightarrow X_1$ such that the following diagram commute
\begin{equation}
\xymatrix{
& \li(G)\ar[r]^{\li(j_0)}& \li(G_1) \ar[dr]^{exp}& \\
\li(G)\times_{G} X  \quad \ar[ur]^{pr} \ar[r]^{{\T}_{\ZZ}(j_{-1},j_0)} \ar[dr]_{pr} &\quad \li(G_1)\times_{G_1} X_1 \ar[ur]^{pr} \ar[dr]_{pr} & &G_1\\
 &X\ar[r]_{j_{-1}}& X_1 \ar[ur]_{u_1}& 
}
\label{eq:diagramhodge}
\end{equation}
where $pr$ are the projections and $exp$ the exponential map.
Since the morphisms $j_{-1}:X \rightarrow X_1$ and $j_0:G \rightarrow G_1$ are surjective, also the morphism ${\T}_{\h}(j_{-1},j_0)$ is surjective. Moreover the equality (\ref{conehodge}) implies that the mixed Hodge structure ${\T}_{\h}(\tau_{\leq 0} (MC(j)[-1]))$ is the kernel of ${\T}_{\h}(j_{-1},j_0): {\T}_{\h}(M) \rightarrow {\T}_{\h}(M_1).$ Hence we have an exact sequence in the category $\mhs$
\[0 \longrightarrow {\T}_{\h}(M_2) \stackrel{{\T}_{\h}(i_{-1},i_0)}\longrightarrow {\T}_{\h}(M) \stackrel{{\T}_{\h}(j_{-1},j_0)}\longrightarrow {\T}_{\h}(M_1) \longrightarrow 0 . \]
Setting $\varphi(M)= {\T}_{\h}(M)$ we have construct an arrow 
\[\varphi : \big\{ \mathrm{1-isomotive}~ M ~ \mathrm{extension~of}~M_1~ \mathrm{by}~M_2  \big\}  \longrightarrow {\Ext}^1_{\mhs}({\T}_{\h}(M_1) ,{\T}_{\h}(M_2) )\]
 which is well defined: isogeneous 1-motives which are extensions of $M_1$ by $M_2$ define the same isomorphism class of extensions of ${\T}_{\h}(M_1)$ by ${\T}_{\h}(M_2)$. The reader can check that the arrow $\varphi$ is in fact an homomorphism, i.e. it respects the group law of extensions of 1-motives and the group law of extensions of mixed Hodge structures.  \\
(2) Now we prove that $\varphi$ is a bijection.\\
Injectivity of $\varphi$ : Let $M$ be a 1-motive extension of $M_1$ by $M_2$ and suppose that 
$\varphi(M)$ is the zero object of ${\Ext}^1_{\mhs}({\T}_{\h}(M_1),{\T}_{\h}(M_2))$. We have
\begin{eqnarray}
\nonumber  {\T}_{\h}(M) &=& {\T}_{\h}(M_1) \oplus {\T}_{\h}(M_2), \\
\nonumber  &=& \li(G_1 \times G_2)\times_{G_1 \times G_2} ( X_1 \times X_2).
\end{eqnarray}
Therefore the 1-motives $M$ and 
$[X_1 \times  X_2  \stackrel{u_1 \times u_2}{\rightarrow}G_1 \times G_2]$ have the same Hodge realization and so they are isogeneous. \\
Surjectivity of $\varphi$ : Now suppose to have an extension $E$ of ${\T}_{\h}(M_1)$ by ${\T}_{\h}(M_2)$ in the category $\mhs$
\[0 \longrightarrow {\T}_{\h}(M_2) \stackrel{f}\longrightarrow E \stackrel{g}\longrightarrow {\T}_{\h}(M_1) \longrightarrow 0 . \]
Since ${\T}_{\h}(M_1)$ and ${\T}_{\h}(M_2)$ are mixed Hodge structures of type $\{0,0\}, \{-1,0\},\\
 \{0,-1\},\{-1,-1\}$ also $E$ must be of this type. Therefore according to the equivalence of category~\cite{D1} (10.1.3), there exists a 1-motive $M$ and morphisms of 1-motives $i=(i_{-1},i_0): M_2 \rightarrow M, j=(j_{-1},j_0): M \rightarrow M_1$  such that ${\T}_{\h}(M)=E$ and ${\T}_{\h}(i)=f, {\T}_{\h}(j)=g.$ It remains to check that $(M,i,j)$ is an extension of $M_1$ by $M_2$. Since $g \circ f=0$, it is clear that $j \circ i=0$.  
Because of the commutative diagram (\ref{eq:diagramhodge}), the surjectivity of $g$ implies the surjectivity of $j_0:G \rightarrow G_1$ and of $j_{-1}:X \rightarrow X_1$. Doing an analogous commutative diagram for the morphism $f={\T}_{\h}(i) :\li(G_2)\times_{G_2} X_2 \rightarrow \li(G)\times_{G} X $, we see that the injectivity of $f$ implies the injectivity of $i_0:G_2 \rightarrow G$ and of $i_{-1}:X_2 \rightarrow X$. 
Let now $m$ be an element of ${\T}_{\h}(M) = \li(G)\times_{G} X $. We have that ${\T}_{\h}(j)(m)=0$ if the projection $pr_{\li(G)}(m)$ of $m$ on $\li(G)$ lies in $\ker(\li(j_0))$, and the projection $pr_X(m)$ of $m$ on $X$ lies in $ \ker(j_{-1})$. Hence
 the morphism $u:X \rightarrow G$ has to induce an isomorphism between $\ker(j_{-1})/ \im(i_{-1})$ and $\ker(j_0)/ \im(i_0)$.
\end{proof}

\begin{prop}\label{prop:l-adic}
Let $M_1=[X_1 \stackrel{u_1}{\rightarrow}G_1]$ and $M_2=[X_2 \stackrel{u_2}{\rightarrow}G_2]$ be two 1-motives defined over a field $k$ of characteristic 0 embeddable in $\CC$. 
 If $M=[X \stackrel{u}{\rightarrow}G]$ is an extension of $M_1$ by $M_2$, then ${\T}_{\ell}(M)$ is an extension of ${\T}_{\ell}(M_1)$ by ${\T}_{\ell}(M_2)$.
\end{prop}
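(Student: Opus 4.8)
The plan is to run the same scheme as in the proof of Proposition~\ref{prop:hodge}(1), replacing the explicit computation of the Hodge realization as a fibred product by the long exact sequence attached to a distinguished triangle. Write $i=(i_{-1},i_0)\colon M_2\to M$ and $j=(j_{-1},j_0)\colon M\to M_1$ for the underlying morphisms of 1-motives. By Proposition~\ref{prop:1-motpicstack} the strictly commutative Picard $\bS$-stack $st(M)$ is an extension of $st(M_1)$ by $st(M_2)$, so Corollary~\ref{cor:extinD} gives $j\circ i=0$, the surjectivity of $\h^{0}(j)$, and a quasi-isomorphism $M_2\xrightarrow{\sim}\tau_{\leq 0}(MC(j)[-1])$ induced by $i$ in $\cD(\bS)$. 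Surjectivity of $\h^{0}(j)$ forces $\h^{0}(MC(j))=\coker(\h^{0}(j))=0$, so the natural inclusion $\tau_{\leq 0}(MC(j)[-1])\to MC(j)[-1]$ is itself a quasi-isomorphism; combining this with the distinguished triangle $MC(j)[-1]\to M\xrightarrow{j}M_1\xrightarrow{+1}$ (the mapping fibre of $j$) I obtain a distinguished triangle
\begin{equation}\label{eq:tri-ell}
 M_2\xrightarrow{i}M\xrightarrow{j}M_1\xrightarrow{+1}
\end{equation}
in $\cD^{[-1,0]}(\bS)$.

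The second step is to exhibit the $\ell$-adic realization as a triangulated invariant of the underlying complex. For a complex $K=[K^{-1}\xrightarrow{d}K^{0}]$ of commutative $S$-group schemes, put $C_n(K):=MC(\ell^{n}\colon K\to K)$, a representative of $K\otimes^{\LL}_{\ZZ}\ZZ/\ell^{n}\ZZ$ concentrated in degrees $[-2,0]$; its $(-1)$-st cohomology sheaf is
\[
 \h^{-1}\bigl(C_n(K)\bigr)=\bigl\{(a,b)\in K^{-1}\times K^{0}\mid d(a)=\ell^{n}b\bigr\}\big/\bigl\{(\ell^{n}a,d(a))\mid a\in K^{-1}\bigr\},
\]
which for $K=N$ a 1-motive is the fppf sheaf whose $\ok$-points carry the Galois module $\T_{\ZZ/\ell^{n}\ZZ}(N)$, so that $\T_{\ell}(N)=\varprojlim_{n}\h^{-1}(C_n(N))(\ok)$. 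Applying the triangulated functor $-\otimes^{\LL}_{\ZZ}\ZZ/\ell^{n}\ZZ$ to \eqref{eq:tri-ell} yields a distinguished triangle $C_n(M_2)\to C_n(M)\to C_n(M_1)\xrightarrow{+1}$, hence the long exact sequence of cohomology fppf sheaves
\[
 \h^{-2}(C_n(M_1))\to\h^{-1}(C_n(M_2))\to\h^{-1}(C_n(M))\to\h^{-1}(C_n(M_1))\to\h^{0}(C_n(M_2)).
\]

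The decisive point is to kill the two outer terms. For a 1-motive $N=[Y\xrightarrow{v}H]$ one has $\h^{-1}(N)=\ker(v)$, a subsheaf of the torsion-free sheaf $Y$, so $\h^{-2}(C_n(N))=\h^{-1}(N)[\ell^{n}]=0$; and $\h^{0}(C_n(N))=\h^{0}(N)/\ell^{n}=(H/v(Y))/\ell^{n}$ vanishes as an fppf sheaf, because $H$ is a smooth connected $S$-group scheme, so multiplication by $\ell^{n}$ is an fppf epimorphism of $H$ and hence of its quotient $H/v(Y)$. The long exact sequence therefore collapses to a short exact sequence of finite \'etale $k$-group schemes $0\to\h^{-1}(C_n(M_2))\to\h^{-1}(C_n(M))\to\h^{-1}(C_n(M_1))\to0$, i.e.\ a Galois-equivariant exact sequence $0\to\T_{\ZZ/\ell^{n}\ZZ}(M_2)\to\T_{\ZZ/\ell^{n}\ZZ}(M)\to\T_{\ZZ/\ell^{n}\ZZ}(M_1)\to0$ on $\ok$-points. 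Passing to $\varprojlim_{n}$ is exact here (all terms finite, so the transition systems satisfy Mittag--Leffler and $\varprojlim^{1}=0$), and gives the desired extension $0\to\T_{\ell}(M_2)\to\T_{\ell}(M)\to\T_{\ell}(M_1)\to0$.

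The step I expect to be the real obstacle is the surjectivity on the right, equivalently the vanishing $\h^{0}(N\otimes^{\LL}_{\ZZ}\ZZ/\ell^{n}\ZZ)=0$: in the Hodge case surjectivity of $\T_{\h}(j)$ was immediate from surjectivity of $j_{-1}$ and $j_{0}$ via the description of $\T_{\h}$ as a fibred product, whereas here it genuinely uses $\ell$-divisibility of the semiabelian parts. A more computational alternative, closer to the style of Proposition~\ref{prop:hodge}, is to keep the quasi-isomorphism $M_2\cong\tau_{\leq 0}(MC(j)[-1])=[X\xrightarrow{k}\ker(u_1,j_0)]$, identify $\T_{\ell}(M_2)$ with $\T_{\ell}$ of that explicit complex, reduce the latter (using surjectivity of $j_{-1}$) to $\ker\bigl(\T_{\ZZ/\ell^{n}\ZZ}(j)\bigr)$ at each finite level, and check surjectivity of $\T_{\ZZ/\ell^{n}\ZZ}(j)$ by hand from surjectivity of $j_{0}$ and the fourth condition of Definition~\ref{def1-mot}; the isomorphism $\ker(j_{-1})/\im(i_{-1})\cong\ker(j_{0})/\im(i_{0})$ is precisely what supplies, through $G_2$, the divisibility needed to lift $\ell^{n}$-torsion, just as in the proof of Proposition~\ref{prop:1-motpicstack}. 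One then concludes by $\varprojlim_{n}$ as before.
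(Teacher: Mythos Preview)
Your proof is correct. The paper's argument is essentially the ``more computational alternative'' you sketch in your final paragraph: it keeps the quasi-isomorphism $M_2\simeq\tau_{\leq 0}(MC(j)[-1])$, writes down the finite-level module ${\T}_{{\ZZ}/\ell^n{\ZZ}}$ of that explicit two-term complex, asserts surjectivity of ${\T}_\ell(j_{-1},j_0)$ from surjectivity of $j_{-1}$ and $j_0$, and identifies the kernel with that module. Your main route is genuinely different: you upgrade the truncated cone to the full cone (using ${\h}^0(MC(j))=\coker{\h}^0(j)=0$), obtain an honest distinguished triangle $M_2\to M\to M_1\to$, apply $-\otimes^{\LL}_{\ZZ}\ZZ/\ell^n\ZZ$, and read off the short exact sequence from the long exact sequence after killing ${\h}^{-2}(C_n(M_1))$ by torsion-freeness of $X_1$ and ${\h}^{0}(C_n(M_2))$ by $\ell$-divisibility of $G_2$.

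What this buys you: the paper's one-line claim that surjectivity of $j_{-1},j_0$ forces surjectivity of ${\T}_\ell(j)$ actually hides a small diagram chase that uses both the fourth condition of Definition~\ref{def1-mot} and the $\ell$-divisibility of $G_2(\ok)$ (surjectivity of the components alone is not enough, since $\ker(j_0)$ need not be connected). Your triangulated argument isolates exactly this input as the vanishing ${\h}^0(C_n(M_2))=(G_2/u_2(X_2))/\ell^n=0$, which is cleaner and makes the role of divisibility transparent. Conversely, the paper's approach has the virtue of staying closer to the explicit description of ${\T}_{{\ZZ}/\ell^n{\ZZ}}$ and parallels the Hodge case verbatim, which is stylistically convenient for the subsequent de Rham proposition.
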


\begin{proof} Denote by $i=(i_{-1},i_0): M_2 \rightarrow M$ and $j=(j_{-1},j_0): M \rightarrow M_1$ the morphisms of 1-motives underlying the extension $M=(M,i,j)$.
By Proposition \ref{prop:1-motpicstack}, the strictly commutative Picard \bS-stack $st(M)$ is an extension of $st(M_1)$ by $st(M_2)$. Corollary \ref{cor:extinD} implies that via $i$ the complexes $M_2$ and $ \tau_{\leq 0} (MC(j)[-1])$ are isomorphic in the derived category ${\cD}(\bS)$, and so,
via the morphism ${\T}_{\ell}(i_{-1},i_0)$ induced by $i=(i_{-1},i_0)$,
their $\ell$-adic realizations are isomorphic:
\[ {\T}_{\ell}(i_{-1},i_0): {\T}_{\ell}(M_2) \stackrel{\cong}{\longrightarrow} {\T}_{\ell}(\tau_{\leq 0} (MC(j)[-1])). \]
Explicitly the $\ell$-adic realization of $\tau_{\leq 0} (MC(j)[-1])$ is
 the projective limit of the ${\ZZ}/\ell^n{\ZZ}\,$-modules
\begin{equation} \label{conel-adic}
 {\T}_{{\ZZ}/\ell^n{\ZZ}}(\tau_{\leq 0} (MC(j)[-1])) = 
 \end{equation}
 \[ \big\{(x,(z,g)) \in X \times \ker(u_1,j_0) ~~\vert~~ (j_{-1},-u)(x)=\ell^n \, (z,g)\big\} \big/ \big\{(\ell^n \, x ,(j_{-1},-u)(x))~~\vert~~ x\in X\big\} 
\]
The morphism of 1-motive $j=(j_{-1},j_0):M \rightarrow M_1 $ induces 
a morphism $ {\T}_{\ell}(j_{-1},j_0): {\T}_{\ell}(M) \rightarrow {\T}_{\ell}(M_1)$
between the $\ell$-adic realizations of $M$ and $M_1$.
Since the morphisms $j_{-1}:X \rightarrow X_1$ and $j_0:G \rightarrow G_1$ are surjective, also the morphism ${\T}_{\ell}(j_{-1},j_0)$ is surjective. Moreover from the equality (\ref{conel-adic}) we get that the ${\QQ}_\ell \, $-vector space ${\T}_{\ell}(\tau_{\leq 0} (MC(j)[-1]))$ is the kernel of the morphism ${\T}_{\ell}(j_{-1},j_0): {\T}_{\ell}(M) \rightarrow {\T}_{\ell}(M_1).$ Hence we have an exact sequence 
\[0 \longrightarrow {\T}_{\ell}(M_2) \stackrel{{\T}_{\ell}(i_{-1},i_0)}\longrightarrow {\T}_{\ell}(M) \stackrel{{\T}_{\ell}(j_{-1},j_0)}\longrightarrow {\T}_{\ell}(M_1) \longrightarrow 0 . \]
\end{proof}

\begin{prop}\label{prop:derham}
Let $M_1=[X_1 \stackrel{u_1}{\rightarrow}G_1]$ and $M_2=[X_2 \stackrel{u_2}{\rightarrow}G_2]$ be two 1-motives defined over a field $k$ of characteristic 0 embeddable in $\CC$. 
 If $M=[X \stackrel{u}{\rightarrow}G]$ is an extension of $M_1$ by $M_2$, then ${\T}_{\dR}(M)$ is an extension of ${\T}_{\dR}(M_1)$ by ${\T}_{\dR}(M_2)$.
\end{prop}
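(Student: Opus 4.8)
The plan is to argue exactly as in the proofs of Propositions~\ref{prop:hodge} and~\ref{prop:l-adic}. Write $i=(i_{-1},i_0):M_2\rightarrow M$ and $j=(j_{-1},j_0):M\rightarrow M_1$ for the morphisms of 1-motives underlying the extension. By Proposition~\ref{prop:1-motpicstack} the strictly commutative Picard $\bS$-stack $st(M)$ is an extension of $st(M_1)$ by $st(M_2)$, so Corollary~\ref{cor:extinD} gives $j\circ i=0$ and an isomorphism in $\cD(\bS)$, induced by $i$, between $M_2$ and $\tau_{\leq 0}(MC(j)[-1])=[X\stackrel{k}{\rightarrow}\ker(u_1,j_0)]$. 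The first step is to make sense of ${\T}_{\dR}$ on this complex: since $[X\rightarrow\ker(u_1,j_0)]$ is quasi-isomorphic to the 1-motive $M_2$, the group ${\Ext}^1([X\rightarrow\ker(u_1,j_0)],{\GG}_a)$ is finite dimensional and equal to ${\Ext}^1(M_2,{\GG}_a)$, hence $[X\rightarrow\ker(u_1,j_0)]$ admits a universal vectorial extension (cf.\ \cite{D1} \S 10.1) and one may set ${\T}_{\dR}(\tau_{\leq 0}(MC(j)[-1]))$ to be the Lie algebra of its source; then ${\T}_{\dR}(i)$ is an isomorphism onto ${\T}_{\dR}(M_2)$. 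Since $j_{-1}$ and $j_0$ are surjective, so is the induced morphism ${\T}_{\dR}(j):{\T}_{\dR}(M)\rightarrow{\T}_{\dR}(M_1)$, and what remains is to identify ${\T}_{\dR}(\tau_{\leq 0}(MC(j)[-1]))$ with $\ker({\T}_{\dR}(j))$, which produces the exact sequence $0\rightarrow{\T}_{\dR}(M_2)\rightarrow{\T}_{\dR}(M)\rightarrow{\T}_{\dR}(M_1)\rightarrow0$.

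To do this I would use the two functorial short exact sequences of $k$-vector spaces through which the de Rham realization is built: for $N\in\{M_1,M,M_2\}$, writing $G_N,T_N,A_N$ for the semiabelian, toric and abelian parts of $N$, there are $0\rightarrow{\Ext}^1(N,{\GG}_a)^*\rightarrow{\T}_{\dR}(N)\rightarrow{\li}G_N\rightarrow0$ (coming from the universal vectorial extension $0\rightarrow{\Ext}^1(N,{\GG}_a)^*\rightarrow G_N^\natural\rightarrow G_N\rightarrow0$) and $0\rightarrow{\li}T_N\rightarrow{\li}G_N\rightarrow{\li}A_N\rightarrow0$. For the Lie-algebra layer one applies the functor ${\li}$ to $0\rightarrow\ker(j_0)\rightarrow G\rightarrow G_1\rightarrow0$; this is exact since $j_0$ is a surjection of smooth commutative $k$-group schemes, hence smooth, and by the last condition in Definition~\ref{def1-mot} the quotient $\ker(j_0)/i_0(G_2)\cong\ker(j_{-1})/i_{-1}(X_2)$ is étale, so ${\li}(\ker j_0)={\li}(i_0(G_2))={\li}G_2$, giving $0\rightarrow{\li}G_2\rightarrow{\li}G\rightarrow{\li}G_1\rightarrow0$. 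For the vector-group layer one uses the long exact ${\Ext}(-,{\GG}_a)$-sequence attached to $0\rightarrow M_2\rightarrow M\rightarrow M_1\rightarrow0$: the triangle $G_2\rightarrow M_2\rightarrow X_2[1]$ gives ${\Hom}(M_2,{\GG}_a)=0$ (abelian varieties and tori admit no nontrivial homomorphism to ${\GG}_a$, and ${\Ext}^{-1}(X_2,{\GG}_a)=0$), and the triangle $G_1\rightarrow M_1\rightarrow X_1[1]$, together with the vanishing of ${\Ext}^i(X_1,{\GG}_a)$ for $i\geq1$ over a field, of ${\Ext}^i({\GG}_m,{\GG}_a)$ for $i\geq1$ in characteristic $0$, and of ${\Ext}^i(A_1,{\GG}_a)$ for $i\geq2$ in characteristic $0$, gives ${\Ext}^2(M_1,{\GG}_a)=0$; hence $0\rightarrow{\Ext}^1(M_1,{\GG}_a)\rightarrow{\Ext}^1(M,{\GG}_a)\rightarrow{\Ext}^1(M_2,{\GG}_a)\rightarrow0$ is exact, and dualizing yields $0\rightarrow{\Ext}^1(M_2,{\GG}_a)^*\rightarrow{\Ext}^1(M,{\GG}_a)^*\rightarrow{\Ext}^1(M_1,{\GG}_a)^*\rightarrow0$. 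Feeding these two exact columns and the three defining rows into the $3\times3$ lemma, the middle column is exact, which is precisely $0\rightarrow{\T}_{\dR}(M_2)\rightarrow{\T}_{\dR}(M)\rightarrow{\T}_{\dR}(M_1)\rightarrow0$; in particular ${\T}_{\dR}(\tau_{\leq 0}(MC(j)[-1]))\cong\ker({\T}_{\dR}(j))$, in agreement with the first paragraph.

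The hard part will be the vector-group layer, that is, the exactness of the functor $N\mapsto{\Ext}^1(N,{\GG}_a)$ — equivalently, of the universal vectorial extension $N\mapsto G_N^\natural$ — on the short exact sequence $0\rightarrow M_2\rightarrow M\rightarrow M_1\rightarrow0$; this is where the characteristic-$0$ hypothesis is genuinely used, through the smoothness of all group schemes involved and the vanishing of the higher groups ${\Ext}^i(A,{\GG}_a)$ ($i\geq2$) and ${\Ext}^i({\GG}_m,{\GG}_a)$ ($i\geq1$). A more hands-on alternative, strictly parallel to the proofs of Propositions~\ref{prop:hodge} and~\ref{prop:l-adic}, is to avoid the $3\times3$ diagram and instead compute ${\T}_{\dR}(\tau_{\leq0}(MC(j)[-1]))$ directly as the Lie algebra of the universal vectorial extension of $[X\rightarrow\ker(u_1,j_0)]$, reading off from this description — and from the surjectivity of $j_{-1}$ and $j_0$ — that ${\T}_{\dR}(i)$ identifies ${\T}_{\dR}(M_2)$ with the kernel of ${\T}_{\dR}(j)$; this still rests on the same vanishing statements, which are needed to know that the universal vectorial extension is compatible with the quasi-isomorphism $M_2\cong\tau_{\leq0}(MC(j)[-1])$.
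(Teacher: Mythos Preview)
Your proposal is correct, and it actually sketches two routes. The one you flag at the end as the ``hands-on alternative, strictly parallel to the proofs of Propositions~\ref{prop:hodge} and~\ref{prop:l-adic}'' is exactly what the paper does: it computes ${\T}_{\dR}(\tau_{\leq 0}(MC(j)[-1]))=\li(\ker(u_1,j_0)^\natural)=\li(\ker(j_0)^\natural)\oplus(\ker(u_1)\otimes k)$ and reads off from this description (and from the surjectivity of $j_{-1},j_0$) that this space is $\ker({\T}_{\dR}(j))$, giving the short exact sequence.

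Your main argument via the $3\times 3$ lemma is a genuinely different decomposition. Instead of working with the mapping cone, you split ${\T}_{\dR}(N)$ along the functorial sequence $0\rightarrow{\Ext}^1(N,{\GG}_a)^*\rightarrow{\T}_{\dR}(N)\rightarrow\li G_N\rightarrow0$ and prove exactness separately on the two outer columns: on the $\li$-side by noting that $\ker(j_0)/i_0(G_2)$ is \'etale, and on the ${\Ext}^1$-side by the vanishing ${\Hom}(M_2,{\GG}_a)=0$ and ${\Ext}^2(M_1,{\GG}_a)=0$. The advantage of your route is that it makes completely explicit which cohomological inputs are needed and where characteristic~$0$ enters; the paper's computation hides these same facts inside the assertion that the universal vectorial extension of $[X\rightarrow\ker(u_1,j_0)]$ behaves well under the quasi-isomorphism with $M_2$ and that $\li(\ker(u_1,j_0)^\natural)=\ker({\T}_{\dR}(j))$. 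Conversely, the paper's approach keeps the argument uniform with the Hodge and $\ell$-adic cases by funneling everything through the single identification $M_2\simeq\tau_{\leq 0}(MC(j)[-1])$.
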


\begin{proof} Denote by $i=(i_{-1},i_0): M_2 \rightarrow M$ and $j=(j_{-1},j_0): M \rightarrow M_1$ the morphisms of 1-motives underlying the extension $M=(M,i,j)$.
By Proposition \ref{prop:1-motpicstack}, the strictly commutative Picard \bS-stack $st(M)$ is an extension of $st(M_1)$ by $st(M_2)$. Corollary \ref{cor:extinD} implies that via $i$ the complexes $M_2$ and $ \tau_{\leq 0} (MC(j)[-1])$ are isomorphic in the derived category ${\cD}(\bS)$, and so, 
via the morphism ${\T}_{\dR}(i_{-1},i_0)$ induced by $i=(i_{-1},i_0)$,
their de Rham realizations are isomorphic:
\[ {\T}_{\dR}(i_{-1},i_0): {\T}_{\dR}(M_2) \stackrel{\cong}{\longrightarrow} {\T}_{\dR}(\tau_{\leq 0} (MC(j)[-1])). \]
Explicitly the de Rham realization of the 1-motive $\tau_{\leq 0} (MC(j)[-1])$ is
\begin{eqnarray} \label{conederham}
 {\T}_{\dR}(\tau_{\leq 0} (MC(j)[-1])) &=& \li\big( \ker(u_1,j_0)^\natural \big)\\
 \nonumber  &=& \li\big( \ker(j_0)^\natural \big)  \oplus \big( \ker(u_1) \otimes k\big)
\end{eqnarray}
where $(\tau_{\leq 0} (MC(j)[-1]))^\natural=[X \rightarrow \ker(u_1,j_0)^\natural]$ is the universal vectorial extension of $\tau_{\leq 0} (MC(j)[-1])$ by the vectorial group ${\Ext}^1(\tau_{\leq 0} (MC(j)[-1]),{\GG}_{a})^*.$
The morphism of 1-motive $j=(j_{-1},j_0):M \rightarrow M_1 $ induces 
a morphism ${\T}_{\dR}(j_{-1},j_0): {\T}_{\dR}(M) \rightarrow {\T}_{\dR}(M_1)$
between the de Rham realizations of $M$ and $M_1$. Explicitly we have the following commutative diagram
\[
\xymatrix{
 & {\T}_{\dR}(M)= \li (G^\natural)  \ar[dr]^{exp} &  & & X \ar[dll] \ar@{=}[r] & X \ar[dll]^{u} \ar[ddd]^{j_{-1}}\\
0 \ar[r] & {\Ext}^1(\tau_{\leq 0} (M,{\GG}_{a}))^* \quad  \ar[r] \ar[d] &\quad G^\natural \ar[r] \ar[d] &  G \ar[r] \ar[d]^{j_0} & 0 &\\
0 \ar[r] & {\Ext}^1(\tau_{\leq 0} (M_1,{\GG}_{a}))^*  \quad  \ar[r]  &\quad G_1^\natural \ar[r]  &  G_1 \ar[r] & 0  &\\
& {\T}_{\dR}(M_1) =\li (G^\natural_1)  \ar[ur]_{exp} &  & & X_1 \ar[ull] \ar@{=}[r] & X_1 \ar[ull]_{u_1}\\
}
\]
where $exp$ are the exponential maps.
Since the morphisms $j_{-1}:X \rightarrow X_1$ and $j_0:G \rightarrow G_1$ are surjective, also the morphism ${\T}_{\dR}(j_{-1},j_0)$ is surjective. Moreover the equality (\ref{conederham}) implies    that the $k$-vector space  ${\T}_{\dR}(\tau_{\leq 0} (MC(j)[-1]))$ is the kernel of ${\T}_{\dR}(j_{-1},j_0): {\T}_{\dR}(M) \rightarrow {\T}_{\dR}(M_1).$ Hence we have an exact sequence 
\[0 \longrightarrow {\T}_{\dR}(M_2) \stackrel{{\T}_{\dR}(i_{-1},i_0)}\longrightarrow {\T}_{\dR}(M) \stackrel{{\T}_{\dR}(j_{-1},j_0)}\longrightarrow {\T}_{\dR}(M_1) \longrightarrow 0 . \]
\end{proof}

\section{Proof of the conjecture}

Let $S$ be the spectrum of a field $k$ of characteristic 0 embeddable in $\CC.$ Fix an algebraic closure $\ok$ of $k$. Let ${\mathcal{MR}}(k)$ be the neutral Tannakian category over $\QQ$ of mixed realizations (for absolute Hodge cycles) over $k$.
The objects of ${\mathcal{MR}}(k)$ are families
\[N=((N_\sigma, \mathcal{L}_\sigma),N_{\dR},N_{\ell}, I_{\sigma, {\dR}}, I_{{\overline \sigma}, {\ell}} )_{\ell,\sigma,{\overline \sigma}}\]
where
\begin{itemize}
  \item $N_\sigma$ is a mixed Hodge structure for any embedding $\sigma:k \rightarrow \CC$ of $k$ in $\CC$;
  \item $N_{\dR}$ is a finite dimensional $k$-vector space with an increasing filtration ${\W}_*$ (the Weight filtration) and a decreasing filtration ${\F}^*$ (the Hodge filtration);
  \item $N_{\ell}$ is a finite-dimensional $\QQ_\ell$-vector space with a continuous $\gal$-action and an increasing filtration ${\W}_*$ (the Weight filtration), which is $\gal$-equivariant, for any prime number $\ell$;
  \item $I_{\sigma, {\dR}}:N_\sigma \otimes_{\QQ} {\CC} \rightarrow N_{\dR}\otimes_k {\CC} $
and $I_{{\overline \sigma}, {\ell}}:N_\sigma \otimes_{\QQ}
{\QQ}_\ell \rightarrow N_{\ell}$ are comparison isomorphisms for any $\ell$, any $\sigma$ and any
${\overline \sigma}$ extension of $\sigma$ to the algebraic closure of $k$;
  \item $\mathcal{L}_\sigma$ is a lattice in $ N_\sigma$ such that,
for any prime number $\ell$, the image $\mathcal{L}_\sigma \otimes {\ZZ}_\ell$
of this lattice through the comparison isomorphism $I_{{\overline \sigma}, {\ell}}$ is a $\gal$-invariant subgroup of $N_{\ell}$ ($\mathcal{L}_\sigma$ is the integral structure of the object $N$ of ${\mathcal{MR}}_{\ZZ}(k)$).
\end{itemize}

According to~\cite{D1} (10.1.3) we have the fully faithful functor
\begin{eqnarray}
\nonumber  1-\mathrm{Mot}(k) & \longrightarrow & {\mathcal{MR}}(k) \\
\nonumber  M & \longmapsto &
{\T}(M)=({\T}_\sigma(M),{\T}_{\rm dR}(M),{\T}_{\ell}(M),
I_{\sigma, {\rm dR}}, I_{{\overline \sigma}, {\ell}} )_{\ell, \sigma,
  {\overline \sigma}}
\end{eqnarray}
\pn which attaches to each 1-motive $M$
its Hodge realization ${\T}_\sigma(M)$ for any embedding
 $\sigma:k \rightarrow \CC$ of $k$ in $\CC$, its de Rham realization ${\rm T}_{\dR}(M)$, its $\mathbf{\ell}$-adic realization ${\T}_{\ell}(M)$ for any prime number $\ell$, and its
comparison isomorphisms. Denote by $\mathcal{M}(k)$ the Tannakian subcategory of ${\mathcal{MR}}(k)$ generated by 1-motives, i.e. the strictly full abelian subcategory of ${\mathcal{MR}}(k)$ which is generated by 1-motives by means of subquotients, direct sums, tensor products and duals.
Recall that according to \cite{BY83} (2.2.5), any embedding $\sigma:k \rightarrow \CC$ of $k$ in $\CC$ furnishes a fully faithful functor from $\mathcal{M}(k)$ to the category $\mhs$ of mixed Hodge structures. 

We can now prove Conjecture \ref{cj}:

 \begin{proof}
Denote by ${\T}(M_i)=({\T}_\sigma(M_1),{\T}_{\rm dR}(M_1),{\T}_{\ell}(M_1),
I_{\sigma, {\rm dR}}, I_{{\overline \sigma}, {\ell}} )$ (for $i=1,2$) the system of realization defined by $M_i$ for $i=1,2$. Consider an extension of ${\T}(M_1)$ by ${\T}(M_2)$ in the category $\mathcal{M}(k)$:
\[0 \longrightarrow {\T}(M_2) \stackrel{f}{\longrightarrow} E \stackrel{g}{\longrightarrow} {\T}(M_1) \longrightarrow 0  \]
with $E=(E_\sigma,E_{\dR},E_{\ell}, 
I_{\sigma, {\dR}}, I_{{\overline \sigma}, {\ell}} ).$
In particular such an extension furnishes an extension in the Hodge realization, i.e. in the category $\mathcal{MHS}$ of mixed Hodge structures:
 \[0 \longrightarrow {\T}_\sigma(M_2) \stackrel{f_\sigma}{\longrightarrow} E_\sigma 
 \stackrel{g_\sigma}{\longrightarrow} {\T}_\sigma(M_1) \longrightarrow 0 . \]
According to Proposition~\ref{prop:hodge}, modulo isogenies there exists a unique extension $(M,i,j)$ of $M_1$ by $M_2$ which 
 defines the extension $E_\sigma$. In other worlds in the category $\mathcal{MHS}$ we have an isomorphism 
\[ \epsilon : E_\sigma \longrightarrow {\T}_{\sigma}(M) \] 
such that the following diagram commute
\begin{equation}
\xymatrix{
0 \ar[r]& {\T}_{\sigma}(M_2)  \ar[r]^{f_{\sigma}}\ar@{=}[d]&
E_{\sigma}  \ar[r]^{g_{\sigma}} \ar[d]_\epsilon&
 {\T}_{\sigma}(M_1)\ar[r] \ar@{=}[d]&0 \\
0 \ar[r]& {\T}_{\sigma}(M_2) \ar[r]^{{\T}_{\sigma}(i)}& {\T}_{\sigma}(M) \ar[r]^{{\T}_{\sigma}(j)}&{\T}_{\sigma}(M_1) \ar[r]&0
}
\label{eq:conj}
\end{equation}
where ${\T}_{\sigma}(i): {\T}_{\sigma}( M_2) \rightarrow {\T}_{\sigma}(M) $ and ${\T}_{\sigma}(j):{\T}_{\sigma}( M) \rightarrow {\T}_{\sigma}(M_1)$ are the morphisms in $\mathcal{MHS}$ induced by the morphisms of 1-motives $i: M_2 \rightarrow M$ and $j:M \rightarrow M_1$. The 1-motive $M$ underlying the extension $(M,i,j)$ is defined over $\CC$. Let  
$M_0$ be a model of $M$ over a finite extension $k'$ of $k$. Since by \cite{BLR90} 7.6 Proposition 5, the restriction of scalars ${\rm Res}_{k'/k}M_0$ is a 1-motive defined over $k$, we can assume that the 1-motive $M$ is in fact defined over $k$. 
By Propositions \ref{prop:l-adic} and \ref{prop:derham}, the extension $(M,i,j)$ of $M_1$ by $M_2$ defines extensions also in the $l$-adic and in the de Rham realizations. The Hodge, the de Rham and the $l$-adic realizations of the data $M,i: M_2 \rightarrow M$ and $j:M \rightarrow M_1$ build the following commutative diagrams with exact rows:
\[
\xymatrix{
0 \ar[r]& {\T}_{\ell}(M_2)\ar[r]^{{\T}_{\ell}(i)}& {\T}_{\ell}(M)\ar[r]^{{\T}_{\ell}(j)}&{\T}_{\ell}(M_1)\ar[r]&0 \\
0 \ar[r]& 
{\T}_{\sigma}(M_2)\otimes_{\QQ} {\QQ}_\ell  \quad \ar[r]^{{\T}_{\sigma}(i)\otimes {\QQ}_\ell}\ar[u]^{I_{{\overline \sigma}, {\ell}}}&
 \quad {\T}_{\sigma}(M) \otimes_{\QQ} {\QQ}_\ell \quad \ar[r]^{{\T}_{\sigma}(j)\otimes {\QQ}_\ell }\ar[u]^{I_{{\overline \sigma},{\ell}}}&
\quad {\T}_{\sigma}(M_1) \otimes_{\QQ}  {\QQ}_\ell\ar[r]\ar[u]^{I_{{\overline \sigma},{\ell}}}&0 
}
\]
\[
\xymatrix{
0 \ar[r]& {\T}_{\sigma}(M_2)\otimes_{\QQ} {\CC} \quad \ar[r]^{{\T}_{\sigma}(i)\otimes {\CC}}\ar[d]_{I_{\sigma, {\dR}}}&
\quad {\T}_{\sigma}(M)\otimes_{\QQ} {\CC} \quad \ar[r]^{{\T}_{\sigma}(j)\otimes {\CC}} \ar[d]_{I_{\sigma, {\dR}}}&
\quad {\T}_{\sigma}(M_1)\otimes_{\QQ} {\CC}\ar[r] \ar[d]_{I_{\sigma, {\dR}}}&0 \\
0 \ar[r]& {\T}_{\dR}(M_2)\otimes_k {\CC} \quad \ar[r]^{{\T}_{\dR}(i)\otimes {\CC}}& \quad {\T}_{\dR}(M)\otimes_k {\CC}  \quad \ar[r]^{{\T}_{\dR}(j)\otimes {\CC}}&
\quad {\T}_{\dR}(M_1) \otimes_k {\CC}\ar[r]&0
}
\]
We get therefore that the system of mixed realizations ${\T}(M)=({\T}_\sigma(M),{\T}_{\rm dR}(M),\\
{\T}_{\ell}(M),
I_{\sigma, {\rm dR}}, I_{{\overline \sigma}, {\ell}} )$ defined by $M$ is an extension of ${\T}(M_1)$ by ${\T}(M_2)$ in the category $\mathcal{M}(k)$. Because of the comparison isomorphisms and of the commutativity of diagram (\ref{eq:conj}), the isomorphism $\epsilon : E_\sigma \rightarrow {\T}_{\sigma}(M)$ implies the commutativity of the following diagram
for the $\ell$-adic realizations
\[
\xymatrix{
0 \ar[d] &  & & & 0 \ar[d] \\
{\T}_\ell(M_2)\ar@{=}[rrrr] \ar[dd]_{f_\ell}  &  & & & {\T}_{\ell}(M_2)\ar[dd]^{{\T}_{\ell}(i)}\\
&  & {\T}_{\sigma}(M_2)\otimes_{\QQ} {\QQ}_\ell \ar[dr]^{{\T}_{\sigma}(i) \otimes {\QQ}_\ell} \ar[dl]_{f_{\sigma}\otimes {\QQ}_\ell} \ar[ull]_{I_{{\overline \sigma}, {\ell}}} \ar[urr]^{I_{{\overline \sigma}, {\ell}}}& & \\ 
E_\ell \ar[dd]_{g_\ell}& E_{\sigma} \otimes_{\QQ} {\QQ}_\ell  \ar[l]_{I_{{\overline \sigma}, {\ell}}} \ar[rr]^{\epsilon \otimes {\QQ}_\ell}\ar[dr]_{g_{\sigma}\otimes {\QQ}_\ell} & &
{\T}_{\sigma}(M)\otimes_{\QQ} {\QQ}_\ell  \ar[r]^{I_{{\overline \sigma}, {\ell}}}\ar[dl]^{{\T}_{\sigma}(j) \otimes {\QQ}_\ell}  &
 {\T}_{\ell}(M) \ar[dd]^{{\T}_{\ell}(j)}\\
&  & {\T}_{\sigma}(M_1)\otimes_{\QQ} {\QQ}_\ell  \ar[dll]_{I_{{\overline \sigma}, {\ell}}} \ar[drr]^{I_{{\overline \sigma}, {\ell}}} & & \\
{\T}_\ell(M_1)\ar@{=}[rrrr] \ar[d] &  & & & {\T}_{\ell}(M_1) \ar[d]\\
0  &  & & & 0 
 }
\]
The reader can check that we have an analogous commutative diagram also for the de Rham realizations. The commutativity of these diagrams (together with the commutativity of diagram (\ref{eq:conj})) means that the system of realizations $E$ and ${\T}(M)$ are isomorphic as extensions of ${\T}(M_1)$ by $ {\T}(M_2)$. Therefore we have proved that any extension of ${\T}(M_1)$ by $ {\T}(M_2)$ in the category $\mathcal{M}(k)$ is defined by a unique 1-motive $M$ modulo isogenies. 
\end{proof}

\begin{rem}\label{essential} The hypothesis \emph{"coming from geometry"} in Deligne's conjecture is essential, because in the category ${\mathcal{MR}}(k)$ of mixed realizations there are too many extensions. In order to explain this fact, we construct an extension of ${\T}(\ZZ)$ by ${\T}(\GG_m)$ in the category  ${\mathcal{MR}}(k)$ which doesn't come from geometry. We start considering the 1-motive $M=[\ZZ \stackrel{u}{\rightarrow}  \GG_m], u(1)=2$, defined over $\QQ$, which is an extension of $\ZZ$ by $\GG_m.$ The mixed realization ${\T}(M)$ is the extension of ${\T}(\ZZ)$ by ${\T}(\GG_m)$ in the category of motives parametrized by the point $2$ of $\GG_m(\QQ)$, i.e. through the bijection 
\[ \GG_m(\QQ) \cong {\Ext}^1({\T}(\ZZ) ,{\T}(\GG_m) )\]
the extension ${\T}(M)$ corresponds to the point 2 of $\GG_m(\QQ)$.
Denote by $E=(E_\h,E_{\dR},\\
E_{\ell}, I_{\sigma, {\dR}}, I_{{\overline \sigma}, {\ell}} )$ the following mixed realization over $\textrm{Spec}(\QQ)$:
\begin{itemize}
	\item $E_{\dR}={\T}_{\dR}(M)$. In particular, $E_{\dR}=\QQ \oplus \QQ$ is the trivial extension of $\QQ$ by $\QQ$;
	\item $E_\h ={\T}_\h(M)$. In particular, the lattice $E_\ZZ$ underlying $E_\h$ is generated by $(\log(2),1), (2 \pi i,0) $ and it is a non trivial extension of $\ZZ$ by $\QQ(1)$
	\item $E_\ell =\ZZ_\ell(1) \oplus \ZZ_\ell$ is the trivial extension of $\ZZ_\ell$ by $\ZZ_\ell(1)$ for the Galois action $\mathrm{Gal}(\overline{\QQ} /\QQ);$ 
	\item $I_{\h, {\dR}}: E_\h \otimes_\QQ \CC \cong E_{\dR} \otimes_\QQ \CC$ is the comparison isomorphism underlying the mixed realization $ {\T}(M)$;
	\item  $I_{{\h}, {\ell}} : E_\h \otimes_\QQ \QQ_\ell \cong E_\ell $ is the comparison isomorphism defined sending $(\log(2),1)$ to $1 \in \ZZ_\ell$ and $(2 \pi i,0) $ to $\exp(\frac{2 \pi i}{l^n}) \in \ZZ_\ell(1)$.
\end{itemize}
This mixed realization $E$ is an extension of ${\T}(\ZZ)$ by ${\T}(\GG_m)$ in the category ${\mathcal{MR}}(\QQ)$ which isn't defined by a 1-motive extension of $\ZZ$ by $\GG_m$. 
\end{rem}


\end{document}